\newcommand{\balg}{\begin{algorithm}}
\newcommand{\ealg}{\end{algorithm}}
\newcommand{\br}{\begin{remark}}
\newcommand{\er}{\end{remark}}
\newcommand{\bex}{\begin{example}}
\newcommand{\eex}{\end{example}}
\newtheorem{theorem}{Theorem}[section]
\theoremstyle{definition}
\newtheorem{example}[theorem]{Example}
\newtheorem{algorithm}[theorem]{Algorithm}
\newtheorem{remark}[theorem]{Remark}
\numberwithin{equation}{section}
\begin{document}

\title{The CP-matrix Approximation Problem}

\author{Jinyan Fan}
\address{Department of Mathematics, and MOE-LSC, Shanghai Jiao Tong University,
Shanghai 200240, P.R. China}
\email{jyfan@sjtu.edu.cn}
\thanks{The first author is partially supported by NSFC 11171217.}
%

\author{Anwa Zhou}
\address{Department of Mathematics, Shanghai Jiao Tong University,
Shanghai 200240, P.R. China}
\email{congcongyan@sjtu.edu.cn}
\thanks{}

\subjclass[2000]{Primary: 90C20, 90C22, 90C26}

\date{}

\dedicatory{}

\begin{abstract}
A symmetric matrix $A$ is completely positive (CP) if there exists an entrywise
nonnegative matrix $V$ such that $A = V V ^T$.
In this paper, we study the CP-matrix approximation problem
 of projecting a matrix onto the intersection of a set of linear constraints
 and the cone of CP matrices.
 We formulate the problem as the linear optimization
 with the norm cone and the cone of moments.
 A semidefinite algorithm is presented for the problem.
A CP-decomposition of the projection matrix can also be obtained if the problem is feasible.

\end{abstract}

\keywords{completely positive matrices, CP projection, CP-matrix approximation,
linear optimization with moments, semidefinite algorithm}

\maketitle

\section{Introduction}
A real $n\times n$ symmetric matrix $A$ is completely
positive (CP) if there exist nonnegative vectors $v_1,\cdots,v_r
\in \mathbb{R}^n_+$ such that
\begin{equation}\label{CPd}
A = v_1 v_1^T +\cdots +v_r v_r^T,
\end{equation}
where $r$ is called the length of the decomposition \eqref{CPd}.
The smallest $r$ in the above is called the CP-rank of $A$.
If $A$ is CP, we call (\ref{CPd}) a CP-decomposition of $A$.
So, $A$ is CP if and only if $A=VV^T$ for an entrywise nonnegative $V$.
Clearly, a CP-matrix is double nonnegative, i.e., it is not only positive semidefinite but also nonnegative entrywise.

Let $\mathcal{S}_n$ be the set of real $n\times n$ symmetric matrices.
For a cone $\mathcal{C}\subseteq \mathcal{S}_n$, the dual cone of $\mathcal{C}$ is defined as
$$
\mathcal{C}^*:= \{B \in \mathcal{S}_n :  A\bullet B \geq 0 \; \text{for all} \; A \in \mathcal{C}\},
$$
where $A\bullet B:=\text{trace}(A^T B)$ is the standard inner product on $\mathbb{R}^{n\times n}$. Denote
\begin{align*}
& \mathcal{CP}_n = \{A \in \mathcal{S}_n : A= VV^T \ \text{with}\ V\geq 0 \}, \text{the completely positive cone},\\
& \mathcal{COP}_n = \{ B \in \mathcal{S}_n: x^T B x\geq 0\ \text{for all}\ x \geq0\}, \text{the copositive cone}.
\end{align*}
Both $\mathcal {CP}_n$ and $\mathcal {COP}_n$
are proper cones (i.e. closed, pointed, convex and
full-dimensional).  Moreover, they are dual to each other \cite{Hall1967}.
A variety of NP-hard problems can be formulated as optimization problems over
the completely positive cone or the copositive cone.
Interested readers are referred to \cite{BermanN,Burer,Burer12,Chen12,Bomze1,deKlerk,Dur} for the work in the field.

The important applications of the CP cone motivate people to study whether a matrix is CP or not.
However, checking the membership in $\mathcal{CP}_n$ has been shown NP-hard,
while checking the membership in $\mathcal{COP}_n$ co-NP-hard \cite{Dickinson11,Murty}.
It is generally difficult to treat $\mathcal {CP}_n$ (or $\mathcal{COP}_n$) directly.
A standard approach is to approximate it by simpler and more tractable cones
\cite{deKlerk, DongA,Bomze02,Pena,Lasserre2014}.
By Nie's approach proposed in \cite{Nie, Nie1},
Zhou and Fan \cite{ZhouFan14a} presented a semidefinite algorithm for the CP-matrix completion problem,
which includes the CP checking as a special case;
a CP-decomposition for a general CP-matrix can also be found by the algorithm.
The approach is also applied to check interiors of the completely positive cone \cite{ZhouFan14b}.


In \cite{SponselD}, Sponseldur and D\"{u}r considered the problem of projecting a matrix onto the cones of copositive and completely positive matrices.
   Unlike projecting onto  the cones of nonnegative matrices and positive semidefinite matrices, projecting onto either  $\mathcal{CP}_n$ or  $\mathcal{COP}_n$ is a nontrivial task in view of the NP-complexity results  \cite{Dickinson11, Murty}.
 Sponseldur and D\"{u}r used polyhedral approximations of $\mathcal {COP}_n$
 to compute the projection of a matrix onto $\mathcal {COP}_n$ and the projection onto $\mathcal {CP}_n$ by a dual approach.

In this paper, we consider the  general CP-matrix approximation problem stated as:
\begin{align}\label{CPLS}
 \begin{array}{rll}
  \displaystyle \min_X  & \|X-C\|_p\\
  \mbox{s.t.} &  A_i\bullet X = b_i, \ i=1,\ldots,m_e, \\
              &   A_i\bullet X \geq b_i, \ i=m_e+1,\ldots,m, \\
              & X \in \mathcal{CP}_n,
 \end{array}
\end{align}
where $C, A_i\in\mathcal{S}_n, b_i \in \mathbb{R} (i=1,\ldots,m)$,
and $\|\cdot\|_p$ is the $p$-norm ($p=1,2,\infty$ or $F$).
The problem is projecting a symmetric matrix onto the intersection of a set of linear constraints and the complete positive cone.

Specially, if $C=0$, then  \eqref{CPLS} becomes the feasibility problem of finding a matrix in the intersection of a set of linear constraints and the CP cone, which has the minimum $p$-norm.

If there are no linear constraints, \eqref{CPLS} is reduced to
the CP projection problem
\begin{align}\label{cpp}
 \begin{array}{rl}
 \displaystyle \min_X  & \|X-C\|_p\\
  \mbox{s.t.} &   X \in \mathcal{CP}_n.
 \end{array}
\end{align}
Hence, the CP projection problem is a special case of \eqref{CPLS}.
Clearly, \eqref{cpp} is always feasible and has a  solution.
If the minimum is zero, then the projection matrix of $C$ onto $\mathcal{CP}_n$ is itself, which implies that $C$ is CP.
If the minimum is nonzero, then $C$ is not CP.
So, solving the CP projection problem \eqref{cpp} provides a way to check whether $C$ is CP.

In this paper, we formulate \eqref{CPLS} as a linear optimization problem with the cone of moments and the $p$-norm cone, then propose a semidefinite algorithm for it.
If \eqref{CPLS} is infeasible, we can get a certificate for it.
If \eqref{CPLS} is feasible, we can get a projection matrix of $C$ onto the set of linear constraints and the CP cone. Moreover, a CP-decomposition of the projection matrix can also be obtained.

The paper is organized as follows.
In section 2, we review the norm cone and its dual cone, and characterize the CP matrix as a moment sequence. In section 3, we show how to formulate \eqref{CPLS} as a linear optimization problem with the norm cone and the cone of moments; its dual problem is also given.
We present a smidefinite algorithm for \eqref{CPLS} and study its convergence properties in section 4. Some computational results are given in section 5.
Finally, we conclude the paper  in section 6.

\section{Preliminaries}
In this section, we first give the dual norm of the $p$-norm on $\mathcal{S}_n$ for $p=1,2,\infty$ and $F$ respectively; the $p$-norm cone and its dual cone are also given.
 Then we characterize CP matrices as moments, and review some basics about moments and localizing matrices, as well as the semidefinite relaxations of the CP cone (cf. \cite{Lasserre01,Lasserre08, Laurent, Lasserre, Nie}).

\subsection{$p$-norm cone and its dual}
For $A\in \mathbb{R}^{n\times n}$, the $p$-norms of $A$ ($p=1,2,\infty, F$) are defined by:
\begin{align*}
&\|A\|_1=\max\limits_{j}\sum_{i=1}^n |A_{ij}|, \quad \mbox{the maximum absolute column sum norm or}\  \mbox{1-norm},  \\
&\|A\|_2= (\lambda_{\max} (A^T A))^{1/2}, \quad \mbox{the spectral norm or}\  \mbox{2-norm},  \\
&\|A\|_\infty= \max\limits_{i}\sum_{j=1}^n |A_{ij}|, \quad \mbox{the maximum absolute row sum norm or}\ \mbox{$\infty$-norm},  \\
&\|A\|_F=(\operatorname{trace}(A^T A))^{1/2},\quad \mbox{the Frobenius norm or}\ F\mbox{-norm}.
\end{align*}
Note that, when $A\in \mathcal{S}_n$, the $1$-norm is the same as $\infty$-norm.

Let $\| \cdot \|$ be a norm on $\mathcal{S}_n$.
 The associated dual norm, denoted by $\| \cdot \|_{*}$, is defined by
\begin{align}\label{norm}
\|A\|_{*}= \sup\{A\bullet X : \|X\| \leq 1\},
\end{align}
(cf. \cite[Section A.1.6]{Boyd}).
It can be proved that the dual norm of the $p$-norm for  $p=1,2,\infty$ and $F$ are:
\begin{align*}
\|A\|_{1*}&= \sum_{j=1}^n \max\limits_{i}|A_{ij}|,\\
\|A\|_{2*}&=\operatorname{trace}((A^T A)^{1/2}),\\
\|A\|_{\infty *}&= \sum_{i=1}^n \max\limits_{j}|A_{ij}|,\\
\|A\|_{F*}&=\|A\|_F.
\end{align*}

For $\| \cdot \|$ on $\mathcal{S}_n$, the norm cone is defined by
$$
\mathcal{K} = \{(X, s) \in \mathcal{S}_n \times \mathbb{R}_+ : \| X \|\leq s\},
$$
where $\mathcal{S}_n \times \mathbb{R}_+$ is the Cartesian product of $\mathcal{S}_n$ and $\mathbb{R}_+$.
The dual cone of $\mathcal{K}$ is defined by
$$
\mathcal{K}^*= \{(Y, t) \in \mathcal{S}_n \times \mathbb{R}_+ :  X\bullet Y+st \geq 0 \; \text{for all} \; (X, s) \in \mathcal{S}_n \times \mathbb{R}_+  \}.
$$
For the $p$-norm cone ($p=1,2,\infty$ and $F$),
$$
\mathcal{K}_p = \{(X, s) \in \mathcal{S}_n \times \mathbb{R}_+ : \| X \|_p\leq s\},
$$
we can prove that the dual cone of $\mathcal{K}_p$ is
\[
\mathcal{K}_p^* = \{(Y, t) \in \mathcal{S}_n \times \mathbb{R}_+ : \| Y \|_{p*} \leq t\}.
\]


\subsection{Characterization as moments}
A symmetric matrix $A\in \mathcal{S}_n$ can be identified by a vector consisting of its upper triangular entries, i.e.
$$
\operatorname{vech}(A)=(A_{11},A_{12},\ldots,A_{1n},A_{22},\ldots,A_{2n},A_{33},\ldots,A_{nn})^T.
$$
Let $\mathbb{N}$ be the set of nonnegative integers.
For $\alpha = (\alpha_1,\cdots, \alpha_{n}) \in \mathbb{N}^{n}$,
denote $|\alpha| := \alpha_1+\cdots+\alpha_{n}$.
Let
\begin{equation}\label{AE}  E  := \{\alpha
\in \mathbb{N}^{n}:\, |\alpha|=2 \}.
\end{equation}
Then, $A$ can also be identified as
$$
a=(a_{\alpha})_{\alpha\in  E } \in \mathbb{R}^ E ,\quad
a_{\alpha}=A_{ij} \; \text{if} \; \alpha=e_i+e_j, i\leq j,
$$
where $e_i$ is the $i$-th unit vector in $\mathbb{R}^n$ and $\mathbb{R}^{ E }$ denotes the space of real vectors indexed by  $\alpha \in  E $.
We call $a$ an $ E $-truncated moment sequence ($ E  $-tms).

Let
\begin{equation} \label{KE}
\Delta=\{x\in \mathbb{R}^{n} :\,
x_1^2+\cdots +x_{n}^2 -1= 0, x_1 \geq 0, \cdots, x_{n} \geq 0\}
\end{equation}
be the nonnegative part of the unit sphere.
 Every nonnegative vector is a multiple
of a vector in  $\Delta$.
So, by (\ref{CPd}),
$A\in \mathcal{CP}_{n}$ if and only if
there exist  $\rho_1, \cdots, \rho_r >0$ and
$u_1,\cdots, u_r \in \Delta$ such that
 \begin{equation}\label{ECPe}
A=\rho_1 u_1 u_1^T +\cdots +\rho_l u_r u_r^T.
 \end{equation}

 The $ E $-truncated $\Delta$-moment problem
($ E $-T$\Delta$MP) studies whether or not a given {$ E $-tms $a$ admits a
$\Delta$-measure} $\mu$, i.e., a nonnegative Borel measure $\mu$ supported
in $\Delta$ such that
$$
a_{\alpha}= \int_\Delta x^{\alpha} d \mu, \quad
\forall\, \alpha \in   E ,
$$
where $x^{\alpha} := x^{\alpha_1}_1 \cdots x^{\alpha_{n}}_{n}$.
A measure $\mu$ satisfying the above is called a
{$\Delta$-representing measure} for $a$.
A measure is called {finitely
atomic} if its support is a finite set, and is called $r$-atomic if its
support consists of at most $r$ distinct points.

Hence, by \eqref{ECPe}, a symmetric matrix $A$, with the identifying vector
$a\in \mathbb{R}^{ E }$,  is completely positive if and only if $a$ admits
an $r$-atomic $\Delta$-measure,  i.e.,
\begin{equation}
\label{ECPee} a=\rho_1 [u_1]_{ E } +\cdots +\rho_r [u_r]_{ E },
\end{equation}
where each $\rho_i>0$, $u_i \in \Delta$  and
\[
[u_i]_{ E } :=(u_i^{\alpha})_{\alpha \in { E }}, \quad i=1,\ldots,r.
\]

Denote
\begin{align}\label{CPR}
\mathcal{R}=\{a\in \mathbb{R}^ E :  a\ \mbox{admits a}\ \Delta\mbox{-measure} \}.
\end{align}
Then, $\mathcal{R}$ is the CP cone (cf. \cite{Nie1}).
Hence,
\begin{align}\label{CPE1}
A\in \mathcal{CP}_{n} \quad \mbox{if and only if}\quad  a\in  \mathcal{R}.
\end{align}

\subsection{Localizing matrices and flatness}
Let $ E $ and $\Delta$ be given in \eqref{AE} and \eqref{KE}, respectively.
Denote
\[
\mathbb{R}[x]_{ E }:= \mbox{span}\{x^{\alpha}: \alpha\in  E \}.
\]
We say $\mathbb{R}[x]_{ E }$ is $\Delta$-full if there exists a polynomial $p \in \mathbb{R}[x]_{ E }$ such that $p|_\Delta>0$ (i.e. $p(u)>0$ for all $u\in \Delta$).
It is shown in \cite{Nie 1} that the dual cone of $\mathcal{R}$ is
\begin{align}\label{COP}
\mathcal{P}=\{p\in \mathbb{R}[x]_{ E }: p(x)\geq 0, \forall x\in \Delta\}.
\end{align}

An $ E $-tms $a \in \mathbb{R}^{ E }$
defines a Riesz functional $F_{a}$ acting on
$\mathbb{R}[x]_{ E }$ as
 \begin{equation}\label{La} {F}_{a}
(\sum_{\alpha\in  E }p_{\alpha} x^{\alpha}):=
\sum_{\alpha\in  E }p_{\alpha} a_{\alpha}.
 \end{equation}
 For convenience, we also denote the inner product $\langle p, a \rangle:= {F}_{a}(p)$.

Let
$$
\mathbb{N}_d^{n} := \{ \alpha \in \mathbb{N}^{n}: \, |\alpha| \leq d\}
\quad\mbox{and}\quad
\mathbb{R}[x]_{d}:=\mbox{span}\{x^{\alpha}: \alpha\in \mathbb{N}^{n}_{d}\}.
$$
For $s \in \mathbb{R}^{\mathbb{N}^{n}_{2k}}$ and $q
\in \mathbb{R}[x]_{2k}$, the $k$-th localizing matrix  of
$q$ generated by $s$ is the symmetric
matrix $L^{(k)}_q (s)$ satisfying
 \begin{equation}\label{Lzqp2}
{F}_s (q p^2)= \operatorname{vec}(p)^T (L^{(k)}_q (s))\operatorname{vec}(p), \quad \forall p\in
\mathbb{R}[x]_{k-\lceil deg(q)/2 \rceil}.
 \end{equation}
In the above, $\operatorname{vec}(p)$ denotes the coefficient vector of polynomial $p$ in the
graded lexicographical ordering, and $\lceil t\rceil$
denotes the smallest integer that is not smaller than $t$.
In particular, when $q=1$, $L^{(k)}_1 (s)$ is called a  $k$-th
order moment matrix  and denoted as $M_k(s)$. We refer to \cite{Nie,Helton,Nie4} for
more details about localizing and moment matrices.

Denote the polynomials:
\begin{align*}
h(x):=  x_1^2+\cdots +x_{n}^2-1,g_0(x):=1,
 g_1(x): =  x_1, \cdots,  g_{n}(x): = x_{n}.
\end{align*}
 Note that $\Delta$ given in \eqref{KE} is nonempty compact.
 It can also be described equivalently as
\begin{equation}\label{K2}
\Delta=\{x\in \mathbb{R}^{n}: \ h(x)= 0, g(x) \geq 0\},
\end{equation}
where $g(x)=(g_0(x), g_1(x),\cdots,g_{n}(x))$.
As shown in \cite{Nie}, a necessary condition for $s \in \mathbb{R}^{\mathbb{N}^{n}_{2k}}$
to admit a $\Delta$-measure is
\begin{equation}
\label{SDPC}
  L^{(k)}_{h} (s) = 0 \quad \mbox{and}\quad L^{(k)}_{g_j} (s) \succeq
0, \quad j=0,1,\cdots,n,
\end{equation}
where $L^{(k)}_{g_j} (s) \succeq 0$ means $L^{(k)}_{g_j} (s)$ is symmetric positive semidefinite.
If, in addition to (\ref{SDPC}), $s$ satisfies the {rank condition}
\begin{equation}
\label{RC}
\text{rank} M_{k-1}(s) =\text{rank} M_{k} (s),
\end{equation}
then $s$ admits a unique
$\Delta$-measure, which is $\text{rank} M_k(s)$-atomic
(cf. Curto and Fialkow \cite{CurtoF}).
We say   $s$ is {flat} if both (\ref{SDPC}) and (\ref{RC}) are satisfied.

Given two tms' $\bar a \in \mathbb{R}^{\mathbb{N}^{n}_{d}}$ and $\bar{\bar{a}} \in
\mathbb{R}^{\mathbb{N}^{n}_{e}}$, we say $\bar{\bar{a}}$ is an  extension  of $\bar a$, if $d\leq e$ and
$\bar a_{\alpha} = \bar{\bar{a}}_{\alpha}$ for all $\alpha \in \mathbb{N}^{n}_{d}$.
If $\bar{\bar{a}}$ is flat and extends $\bar a$, we say $\bar{\bar{a}}$ is a {flat
extension} of $\bar a$.
 We denote
by $\bar{\bar{a}}|_{ E }$ the subvector of $\bar{\bar{a}}$, whose entries are indexed by
$\alpha \in  E $.
Note that an $ E $-tms $a\in \mathbb{R}^{ E }$ admits a $\Delta$-measure
if and only if it is extendable to a flat tms $\tilde a \in \mathbb{R}^{\mathbb{N}^{n}_{2k}}$
for some $k$ (cf. \cite{Nie}). By (\ref{CPE1}), we have that
\begin{align}\label{CPE2}
A\in \mathcal{CP}_n \quad \mbox{if and only if}\quad a \ \mbox{has a flat extension}.
\end{align}

\subsection{Semidefinite relaxations}
We call a subset $I \subseteq \mathbb{R}[x]$ an ideal if $I + I \subseteq I$ and $I \cdot \mathbb{R}[x]\subseteq I$.
For a tuple $\chi = (\chi_1, \ldots, \chi_m)$ of
polynomials in $\mathbb{R}[x]$, denote  by $I(\chi)$ the ideal generated by $\chi_1,\ldots , \chi_m$.
 The smallest ideal containing all $\chi_i$ is
the set $\chi_1 \mathbb{R}[x] + \cdots + \chi_m \mathbb{R}[x]$.
A polynomial $f \in \mathbb{R}[x]$ is called a sum of squares
(SOS) if there exist $f_1, \ldots , f_k \in \mathbb{R}[x]$ such that $f = f^2_1 + \cdots + f^2_k$.

Let $h$ and $g$ be as in \eqref{K2}. Denote
\begin{equation}\label{I2k}
  I_{2k}(h) = \left\{h(x)\phi(x): \operatorname{deg}(h\phi)\leq 2k \right\},
\end{equation}
and
\begin{equation}\label{Qk}
  Q_k (g)= \left\{ \sum_{j=0}^{n} g_j \varphi_j : \text{each} \; \operatorname{deg}(g_j \varphi_j)\leq 2k \; \text{and} \; \varphi_j \; \text{is SOS}\right\}.
\end{equation}
Then, $I(h)=\bigcup_{k\in \mathbb{N}} I_{2k}(h)$ is the ideal generated by $h$,
and $Q(g)= \bigcup_{k\in \mathbb{N}} Q_k (g)$ is the quadratic module generated by $g$ (cf. \cite{Nie1}).
We say $I(h)+ Q(g)$ is archimedean if there exists $R> 0$ such that $R- \|x\|^2 \in I(h)+Q(g)$.
Clearly, if $f\in I(h) + Q(g)$, then $f|_\Delta \geq 0$.
Conversely, if $f|_\Delta > 0$ and $I(h) + Q(g)$ is archimedean, then $f\in I(h) + Q(g)$.
 This is due to Putinar¡¯s Positivstellensatz (cf. \cite{Putinar}).

For each $k \in \mathbb{N}$, denote
\begin{equation}\label{DDk}
\Psi_k = \left\{ p\in \mathbb{R}[x]_{ E }
  : p \in I_{2k}(h)+Q_k (g) \right\}.
\end{equation}
Note that $ E $ is finite, $\mathbb{R}[x]_{ E }$ is $\Delta$-full because $p=\sum_{i=1}^{n}x_i^2|_\Delta>0$,
and $I(h)+ Q(g)$ is archimedean because $1- \|x\|^2 = -h(x) \in I(h)+Q(g)$.
By \cite[Propositions 3.5]{Nie1}, we have
\begin{equation}\label{h0}
\Psi_{1}  \subseteq \cdots \subseteq \Psi_{k}  \subseteq
\Psi_{k+1}  \subseteq \cdots \subseteq \mathcal{P}.
\end{equation}
Moreover,
\begin{equation}
\operatorname{int}(P)  \subseteq  \bigcup_{k} \Psi_k  \subseteq  P.
\end{equation}

Correspondingly, for each $k \in \mathbb{N}$, denote
\begin{equation}\label{h11}
  \Gamma_k = \left\{s \in \mathbb{R}^{\mathbb{N}^{n}_{2k}}: L^{(k)}_{h} (s) = 0, L^{(k)}_{g_j} (s) \succeq 0, j=0,1,\cdots,n
  \right\},
\end{equation}
and
\begin{equation}\label{h12}
  \Upsilon_k = \left\{ s|_{ E }
  : s \in \Gamma_k
  \right\},
\end{equation}
(If $k < deg( E )/2$, $\Upsilon_k$ is defined to be $\mathbb{R}^{ E }$, by default).
Since  $ E $ is finite, $\mathbb{R}[x]_{ E }$ is $\Delta$-full and $I(h)+ Q(g)$ is archimedean, by \cite[Proposition 3.3]{Nie1}, we have
\begin{eqnarray}\label{h13}
\Upsilon_{1}  \supseteq \cdots \supseteq \Upsilon_{k}  \supseteq
\Upsilon_{k+1}  \supseteq \cdots \supseteq\mathcal{R},
\end{eqnarray}
and
\begin{equation}\label{h14}
\bigcap_{k=1}^{\infty} \Upsilon_{k}  = \mathcal{R}.
\end{equation}
Moreover, $\Psi_k$ and $\Upsilon_{k}$ are dual to each other (cf. \cite{Lasserre, Laurent, Nie1}).

As shown above,  the hierarchy of $\Upsilon_k$ provides the outer approximations of $\mathcal{R}$
 and converges monotonically and asymptotically to $\mathcal{R}$.
So, $\Upsilon_k$ can approximate the completely positive cone $\mathcal{R}$ arbitrarily well.

\section{Linear optimization with the CP cone and norm cone}\label{formul}
In this section, we formulate the CP-matrix approximation problem \eqref{CPLS} as a linear optimization problem with the cone of moments and the $p$-norm cone. The duality is also discussed.

Introducing a variable $\gamma\in \mathbb R_+$,
we transform \eqref{CPLS} to the following problem:
\begin{align}\label{CPLS1}
\begin{array}{rl}
 \displaystyle  \min\limits_{X, \gamma}  & \gamma \\
  \mbox{s.t.} &  \| X-C \|_p\leq \gamma,\\
 &  A_i\bullet X = b_i, \  i=1,\ldots,m_e,\\
 &  A_i\bullet X \geq b_i, \ i=m_e+1,\ldots,m,  \\
  & X \in \mathcal{CP}_n.
 \end{array}
\end{align}
Let $Y=X-C$. Then \eqref{CPLS1} can be equivalently written as
 \begin{align}\label{CPLS1b}
\begin{array}{rl}
 \displaystyle  \min\limits_{X, Y, \gamma}  & \gamma \\
  \mbox{s.t.} &  A_i\bullet X = b_i,\ i=1,\ldots,m_e, \\
 &  A_i\bullet X \geq b_i,\ i=m_e+1,\ldots,m,  \\
   & X-Y=C,\\
   & X \in \mathcal{CP}_n,\\
   & (Y,\gamma)\in \mathcal{K}_p.
 \end{array}
\end{align}
The Lagrange function of \eqref{CPLS1b} is:
\begin{align}
 L(X, Y, \gamma,\lambda, P, S, Z, \xi)
 =& \gamma-\sum_{i=1}^{m_e}\lambda_i(A_i\bullet X-b_i)
 -\sum_{i=m_e+1}^m\lambda_i(A_i\bullet X-b_i)\\
  & -  (X- Y- C)\bullet P
   -X\bullet S   -Y\bullet Z -\gamma\xi.\nonumber
 \end{align}
Denote by $\mathcal{F}\eqref{CPLS1b}$ the feasible set of \eqref{CPLS1b}.
 Then, the Lagrange dual problem of \eqref{CPLS1b} is
 \begin{align}\label{CPLSD}
 \begin{array}{cl}
 \displaystyle
\max_{\lambda, P, S, Z, \xi} &\inf\limits_{(X, Y, \gamma)\in \mathcal{F}\eqref{CPLS1b}} L(X, Y, \gamma, \lambda, P, S, Z, \xi)\\
 \mbox{s.t.} & \lambda_{i}\geq 0,\ i=m_e+1,\ldots m,\\
 & P\in \mathcal{S}_n,\\
 & S\in \mathcal{COP}_n,\\
 & (Z,\xi)\in \mathcal{K}_p^*.
 \end{array}
\end{align}
Let $b=(b_1,\ldots,b_m)^T$.
Then \eqref{CPLSD} can be simplified as:
 \begin{align}\label{CPLSDb}
 \begin{array}{cl}
 \displaystyle
\max_{\lambda,  S, Z} & b^T \lambda +C\bullet Z\\
 \mbox{s.t.} &\sum\limits_{i=1}^m\lambda_i A_i+ S  +Z = 0,\\
 & \lambda_i\geq 0, \ i=m_e+1,\ldots,m,\\
 & (S, (Z,1))\in \mathcal{COP}_n\times \mathcal{K}_p^*.
  \end{array}
\end{align}


 Denote
\begin{align*}
x=& \operatorname{vech}(X)\in \mathbb{R}^{\bar n}\quad \mbox{with}\quad \bar n=n(n+1)/2,\\
 a_i=&\operatorname{vech}(2E_n-I_n)\circ \operatorname{vech}(A_i)\in \mathbb{R}^{\bar n}, \  i=1,\ldots, m,
\end{align*}
where ``$\circ$" denotes the Hadamard product, $E_n$ is the all-ones matrix and $I_n$ the identity matrix of order $n$ respectively,
Then, \eqref{CPLS1b} can be formulated as the following linear optimization problem:
\begin{align*}
(P): \qquad\quad
\begin{array}{rl}
 \vartheta_P =  \min\limits_{x, Y,\gamma}  &  \gamma\\
  \mbox{s.t.}   &   a_i^T x   = b_i,\ i=1,\ldots,m_e, \\
 &   a_i^T x  \geq b_i,\ i=m_e+1,\ldots,m, \\
 & x-\operatorname{vech}(Y)=\operatorname{vech}(C),\\
  & (x,(Y,\gamma)) \in  \mathcal{R} \times \mathcal{K}_p,
 \end{array}
\end{align*}
where $\mathcal{R}$ is given by \eqref{CPR}.
The  dual problem of ($P$) is
\begin{align*}
(D): \qquad\quad
\begin{array}{rl}
 \vartheta_D =  \max \limits_{\lambda, s,Z}  &  b^T \lambda +C\bullet Z\\
  \mbox{s.t.}  &\sum\limits_{i=1}^m \lambda_i  a_i + s+ \operatorname{vech}(2E_n-I_n) \circ \operatorname{vech}(Z)= 0,\\
  & \lambda_i\geq 0, \ i=m_e+1,\ldots,m,\\
   & (s,(Z,1)) \in \mathcal{P} \times \mathcal{K}_p^*,
 \end{array}
\end{align*}
where $\mathcal{P}$ is given by \eqref{COP}.

By weak duality, for all feasible points $(x, Y,\gamma)$ in ($P$) and $(\lambda, s,Z)$ in ($D$), we have
\begin{align}\label{pd}
\vartheta_P\geq \vartheta_D.
\end{align}
 The theorem below shows when the strong duality holds, i.e, the equality holds for \eqref{pd}.
\begin{theorem}
\label{strongdual}
For problems ($P$) and ($D$),
\begin{itemize}
  \item[(i)] If there exist $(x^*, Y^*,\gamma^*)\in \mathcal{F}($P$)$ and $(\lambda^*, s^*,Z^*) \in \mathcal{F}($D$)$
  such that $\gamma^*-b^T \lambda^*-C\bullet Z^*=0$,
  then $(x^*, Y^*,\gamma^*)$ and $(\lambda^*,s^*,Z^*)$ are the minimizers of ($P$) and ($D$), respectively.
  \item[(ii)] If there exists $(x, Y,\gamma)$ such that $(x, Y,\gamma)\in \mathcal{F}($P$)$,
  $  a_i \bullet x  > b_i(i=m_e+1,\ldots,m)$
  and $(x, (Y,\gamma)) \in \operatorname{int}( \mathcal{R} \times \mathcal{K}_p)$,
  then we have $\vartheta_P=\vartheta_D$. Furthermore, if $\vartheta_P$ is finite, then there exists an optimal $(\lambda^*,s^*,Z^*)$ such that $\vartheta_P=
   b^T \lambda^*+C\bullet Z^*=\vartheta_D$.
  If $(x^*, Y^*,\gamma^*)$ is the minimizer of ($P$), then there exists a dual feasible point $(\overline{\lambda},\overline{s},\overline{Z})$
  such that $\gamma^*-b^T \bar\lambda-C\bullet \bar Z=0$.
  \item[(iii)] If there exists $(\lambda,s,Z)\in \mathcal{F}($D$)$ such that $\lambda_i> 0(i=m_e+1,\ldots,m)$
  and $(s,(Z,1)) \in \operatorname{int}(\mathcal{P} \times \mathcal{K}^*_p)$,
  then we have $\vartheta_P=\vartheta_D$. Furthermore, if $\vartheta_D$ is finite, then there exists an optimal $(x^*, Y^*,\gamma^*)$ such that $\vartheta_P=\gamma^*=\vartheta_D$.
  If $(\lambda^*,s^*,Z^*)$ is the minimizer of ($D$), then there exists a primal feasible  $(\overline{x},\overline{Y},\overline{\gamma})$ such that $\overline{\gamma}-b^T \lambda^*-C\bullet Z^*=0$.
\end{itemize}
\end{theorem}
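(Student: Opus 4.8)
The plan is to regard $(P)$ and $(D)$ as a primal--dual pair of conic linear programs over the product cone $\mathcal{R}\times\mathcal{K}_p$ and its dual $\mathcal{P}\times\mathcal{K}_p^*$, and to deduce the three assertions from the weak-duality bound \eqref{pd} together with the standard Slater-type strong-duality theorem for conic optimization. The structural ingredients I would record first are already available in the excerpt: $\mathcal{R}$ and $\mathcal{P}$ form a proper dual pair (the moment/CP cone and its copositive dual), $\mathcal{K}_p$ and $\mathcal{K}_p^*$ form a proper dual pair by the computation in Section~2, and hence $\mathcal{R}\times\mathcal{K}_p$ and $\mathcal{P}\times\mathcal{K}_p^*$ are closed convex cones dual to each other. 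With these in hand, $(P)$ and $(D)$ fit the template of a conic program with finitely many affine equality and inequality constraints.

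Part (i) is immediate from weak duality. For any $(x^*,Y^*,\gamma^*)\in\mathcal{F}(P)$ the primal objective value satisfies $\gamma^*\geq\vartheta_P$, and for any $(\lambda^*,s^*,Z^*)\in\mathcal{F}(D)$ the dual objective satisfies $b^T\lambda^*+C\bullet Z^*\leq\vartheta_D$. Combining these with $\vartheta_P\geq\vartheta_D$ from \eqref{pd} and the hypothesis $\gamma^*=b^T\lambda^*+C\bullet Z^*$ yields the sandwich
\[
\gamma^*\ \geq\ \vartheta_P\ \geq\ \vartheta_D\ \geq\ b^T\lambda^*+C\bullet Z^*\ =\ \gamma^*,
\]
so every inequality is an equality. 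Hence $\gamma^*=\vartheta_P$ and $b^T\lambda^*+C\bullet Z^*=\vartheta_D$, i.e.\ the two points are optimal for their respective problems.

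For parts (ii) and (iii) I would invoke the two halves of the conic strong-duality theorem. In (ii) the hypothesis is precisely a Slater (strict-feasibility) condition for $(P)$: a primal point satisfying the inequality constraints strictly, $a_i\bullet x>b_i$ for $i=m_e+1,\ldots,m$, and lying in $\operatorname{int}(\mathcal{R}\times\mathcal{K}_p)$. Strong duality then gives $\vartheta_P=\vartheta_D$, and when $\vartheta_P$ is finite the dual optimum is attained; evaluating the common value at a dual optimizer gives $\vartheta_P=b^T\lambda^*+C\bullet Z^*=\vartheta_D$. The final ``furthermore'' in (ii) is the complementary-slackness statement: a primal minimizer $(x^*,Y^*,\gamma^*)$ together with the zero gap produces a dual feasible $(\overline\lambda,\overline s,\overline Z)$ with $\gamma^*=b^T\overline\lambda+C\bullet\overline Z$, that is, $\gamma^*-b^T\overline\lambda-C\bullet\overline Z=0$. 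Part (iii) is entirely symmetric with the roles of $(P)$ and $(D)$ interchanged: the hypothesis is dual Slater ($\lambda_i>0$ and $(s,(Z,1))\in\operatorname{int}(\mathcal{P}\times\mathcal{K}_p^*)$), which yields zero gap and primal attainment.

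The hard part will not be the algebra but the careful verification that the stated hypotheses genuinely invoke the conic strong-duality theorem---in particular, that ``interior of $\mathcal{R}\times\mathcal{K}_p$'' together with strict feasibility of the affine inequalities is the correct constraint qualification, and that the cones are genuinely \emph{closed} (via the properness of $\mathcal{R}$ and $\mathcal{P}$) so that no relative-interior subtleties arise in the separation argument underlying the theorem. Once the dual-cone identifications of Section~2 are assembled, I would cite a standard reference such as \cite{Boyd} for the Slater-based strong-duality theorem rather than reproducing the hyperplane-separation proof, and the three parts follow from weak duality for (i) and the two directions of that theorem for (ii) and (iii).
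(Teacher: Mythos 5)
Your proposal is correct and matches the paper's intent exactly: the paper omits the proof of this theorem entirely, stating only that it ``can be implied by the standard strong duality theory (cf.\ Ben-Tal and Nemirovski),'' which is precisely the weak-duality sandwich for (i) and the Slater-based conic strong-duality theorem for (ii) and (iii) that you spell out. The only cosmetic difference is your citation of Boyd--Vandenberghe in place of the paper's reference to Ben-Tal--Nemirovski.
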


Theorem \ref{strongdual} can be implied by the standard strong duality theory (cf. \cite{Ben}),
so we omit the proof here.
 For convenience, if there exists $(\lambda,s,Z)\in \mathcal{F}($D$)$ such that $\lambda_i> 0(i=m_e+1,\ldots,m)$
  and $(s,(Z,1)) \in \operatorname{int}(\mathcal{P} \times \mathcal{K}^*_p)$, we call ($D$) have a relative interior.

\section{A semidefinite algorithm}
In this section, we present a semidefinite algorithm for the CP-matrix approximation problem
and study its convergence properties.

\subsection{A semidefinite algorithm}
As shown in \eqref{h0} and \eqref{h13}, $\mathcal{R}$ and $\mathcal{P}$ have nice relaxations
$\Upsilon_k$ and $\Psi_k$, respectively. By \eqref{h11} and \eqref{h12}, the $k$-th order
relaxation of ($P$) can be defined as
\begin{align*}
(P^k): \qquad\quad
\begin{array}{rl}
 \vartheta_P^k =  \min\limits_{x, Y,\gamma, \tilde x}  &  \gamma\\
  \mbox{s.t.}   &   a_i^T x   = b_i, \ i=1,\ldots,m_e, \\
 &   a_i^T x  \geq b_i, \ i=m_e+1,\ldots,m, \\
 & x-\operatorname{vech}(Y)=\operatorname{vech}(C),\\
 & x=\tilde x|_{ E }, \\
  & (\tilde x,(Y,\gamma)) \in   \Gamma_k\times \mathcal{K}_p,
 \end{array}
\end{align*}
and the dual problem of $(P^k)$ is
\begin{align*}
(D^k): \qquad\quad
\begin{array}{rl}
 \vartheta_D^k =   \max \limits_{\lambda, s,Z}   &  b^T \lambda +C\bullet Z \\
  \mbox{s.t.}  &\sum\limits_{i=1}^m \lambda_i  a_i + s+ \operatorname{vech}(2E_n-I_n) \circ \operatorname{vech}(Z) = 0,\\
  & \lambda_i\geq 0, \ i=m_e+1,\ldots,m,\\
   & (s,(Z,1)) \in \Psi_k \times \mathcal{K}_p^*.
 \end{array}
\end{align*}
Both $(P^k)$ and $(D^k)$ are SDP problems, so they can be solved efficiently.

Clearly, $\vartheta_P^k\leq \vartheta_P$ and $\vartheta_D^k\leq \vartheta_D$ for all $k$.
Suppose $(x^{*,k}, Y^{*,k},\gamma^{*,k},\tilde x^{*,k})$ is a minimizer of ($P^k$)
and $(\lambda^{*,k},s^{*,k},Z^{*,k})$ is a maximizer of ($D^k$).
If $x^{*,k}=\tilde x^{*,k}|_{ E }\in \mathcal{R}$, then $\vartheta_P^k=\vartheta_P$
and $(x^{*,k}, Y^{*,k},\gamma^{*,k})$ is a minimizer of ($P$), i.e., the relaxation $(P^k)$ is exact for solving ($P$).
In this case, if $\vartheta_P^k=\vartheta_D^k$, then $\vartheta_D^k=\vartheta_D$ and
$(\lambda^{*,k},s^{*,k},Z^{*,k})$ is a maximizer of ($D$).
If the relaxation ($P^k$) is infeasible, then ($P$) is infeasible, i.e., \eqref{CPLS} is infeasible.

Based on the above, we propose a semidefinite algorithm for the CP-matrix approximation problem \eqref{CPLS}.

\balg[A semidefinite algorithm for the CP-matrix approximation problem]\label{Algorithm}
 \

\textbf{Step 0.} Input $C\in \mathcal{S}_n$ and $\Delta$ as (\ref{KE}). Let $k := 2$.

\textbf{Step 1.}
Solve the primal-dual pair ($P^k$)-($D^k$). If ($P^k$) is infeasible, stop
and output that ($P$) is infeasible; otherwise, compute an optimal solution
$(x^{*,k}, Y^{*,k},\gamma^{*,k},\tilde x^{*,k})$ of ($P^k$). Let $t :=1$.

\textbf{Step 2.} Let $\hat x :=\tilde x^{*,k}|_{2t}$. If the rank condition (\ref{RC})
is not satisfied, go to Step 4.

\textbf{Step 3.} Compute the finitely atomic measure $\mu$ admitted by $\hat x$:
$$\mu = \rho_1 \delta(u_1) + \cdots +\rho_r \delta(u_r),$$
where $\rho_i > 0$, $u_i \in \Delta$,  $r = \text{rank} (M_t (\hat x))$  and $\delta(u_i)$ is the Dirac
measure supported on the point $u_i\in \Delta$.

\textbf{Step 4.} If $t < k$, set $t := t+1$ and go to Step 2; otherwise, set
$k := k+1$ and go to Step 1.
 \ealg

If \eqref{CPLS} is feasible, Algorithm \ref{Algorithm} can give
 a projection matrix of a symmetric matrix onto the intersection of a set of linear constraints and the CP cone.
 A CP-decomposition of the projection matrix can also be obtained. If  \eqref{CPLS} is infeasible,
 Algorithm \ref{Algorithm} can   give a certificate for the infeasibility.

We use  Step 2  to check whether $\tilde x^{*,k}|_{2t}$ is flat or not.
Nie \cite{Nie1} showed it might be possible that $x^{*,k} \in \mathcal{R}$ while $\tilde x^{*,k}|_{2t}$ is not flat for all $t$ . In such cases,
we can apply the algorithms given in \cite{Nie, ZhouFan14a} to check whether $x^{*,k}\in \mathcal{R}$ or not.


We use Henrion and Lasserre's method in \cite{HenrionJ} to get a $r$-atomic $\Delta$-measure for $\hat x$, which can further produce the
CP-decomposition of the projection matrix.

We discuss how to solve ($P^k$) for different $p$-norm cone in section 4.3.

\subsection{Convergence properties}
We give the asymptotic convergence of Algorithm \ref{Algorithm} as follows.

\begin{theorem}
\label{Algorithmresults}
Let $ E $ and $\Delta$ be as in \eqref{AE} and \eqref{K2}, respectively.
Suppose ($P$) is feasible and ($D$)  has a relative interior point.
 Algorithm \ref{Algorithm} has the following properties:
\begin{itemize}

\item [(i)] For all $k$ sufficiently large,  ($D^k$) has a relative interior point  and ($P^k$) has a
minimizer  $(x^{*,k}, Y^{*,k},\gamma^{*,k},\tilde x^{*,k})$.

\item [(ii)]  The sequence $\{(x^{*,k}, Y^{*,k},\gamma^{*,k})\}$ is bounded, and each of its accumulation points is a
minimizer of ($P$).  The sequence $\{\gamma^{*,k}\}$ converges to the minimum of $\eqref{CPLS}$.
\end{itemize}
\end{theorem}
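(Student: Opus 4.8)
The plan is to prove (i) and (ii) in turn, building everything on the nested relaxations \eqref{h13}--\eqref{h14}, the duality between $\Psi_k$ and $\Upsilon_k$, and the strong-duality statement of Theorem~\ref{strongdual}. For (i) I would first transfer a relative interior point of ($D$) down to ($D^k$). Let $(\lambda,s,Z)$ be such a point, so it satisfies the linear equation of ($D$), has $\lambda_i>0$ for $i>m_e$, and $(s,(Z,1))\in\operatorname{int}(\mathcal{P}\times\mathcal{K}_p^*)$; in particular $s|_\Delta>0$. The triple already satisfies the identical linear equation of ($D^k$) and still has $\lambda_i>0$ and $(Z,1)\in\operatorname{int}(\mathcal{K}_p^*)$, so the only thing to verify is $s\in\operatorname{int}(\Psi_k)$ for all large $k$. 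I would argue this by perturbation: since $\Delta$ is compact and $s|_\Delta>0$, for small $\epsilon>0$ the polynomial $s-\epsilon\sum_i x_i^2$ (still in $\mathbb{R}[x]_{E}$) equals $s-\epsilon>0$ on $\Delta$, so by Putinar's Positivstellensatz, using that $I(h)+Q(g)$ is archimedean, it belongs to $I_{2k}(h)+Q_k(g)$ for some $k$, hence to $\Psi_k$. On the other hand $\epsilon\sum_i x_i^2$ has positive-definite Gram matrix $\epsilon I$, and every small degree-two perturbation keeps the Gram matrix positive definite, hence SOS and in $Q_1(g)\subseteq\Psi_k$; thus $\epsilon\sum_i x_i^2\in\operatorname{int}(\Psi_k)$ relative to $\mathbb{R}[x]_{E}$. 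Writing $s=(s-\epsilon\sum_i x_i^2)+\epsilon\sum_i x_i^2$ as a cone element plus an interior element places $s\in\operatorname{int}(\Psi_k)$, so $(\lambda,s,Z)$ is a relative interior point of ($D^k$) for all large $k$. Since ($P$) is feasible we have $\vartheta_D^k\le\vartheta_D\le\vartheta_P<\infty$, while $\vartheta_D^k$ is bounded below by the objective value $b^T\lambda+C\bullet Z$ at the fixed point $(\lambda,s,Z)$, so $\vartheta_D^k$ is finite; the strong-duality argument behind Theorem~\ref{strongdual}(iii), applied to ($P^k$)--($D^k$), then yields $\vartheta_P^k=\vartheta_D^k$ and a minimizer $(x^{*,k},Y^{*,k},\gamma^{*,k},\tilde x^{*,k})$ of ($P^k$).

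\emph{Part (ii).} For boundedness, note $0\le\gamma^{*,k}=\vartheta_P^k\le\vartheta_P$ (upper bound by relaxation), so $\{\gamma^{*,k}\}$ is bounded; then $(Y^{*,k},\gamma^{*,k})\in\mathcal{K}_p$ gives $\|Y^{*,k}\|_p\le\gamma^{*,k}\le\vartheta_P$, bounding $\{Y^{*,k}\}$, and $X^{*,k}=Y^{*,k}+C$ bounds $\{X^{*,k}\}$, hence $\{x^{*,k}\}$. Let $(x^*,Y^*,\gamma^*)$ be an accumulation point along a subsequence $k_j$. Passing to the limit in the closed constraints $a_i^Tx=b_i$, $a_i^Tx\ge b_i$, $x-\operatorname{vech}(Y)=\operatorname{vech}(C)$ and $(Y,\gamma)\in\mathcal{K}_p$ shows they hold at the limit. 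The decisive requirement is $x^*\in\mathcal{R}$: for any fixed $k_0$ and all $k_j\ge k_0$ we have $x^{*,k_j}\in\Upsilon_{k_j}\subseteq\Upsilon_{k_0}$ by \eqref{h13}, so once $\Upsilon_{k_0}$ is known to be closed the limit satisfies $x^*\in\Upsilon_{k_0}$; letting $k_0\to\infty$ and invoking \eqref{h14} gives $x^*\in\bigcap_{k_0}\Upsilon_{k_0}=\mathcal{R}$. Hence $(x^*,Y^*,\gamma^*)$ is feasible for ($P$), so $\gamma^*\ge\vartheta_P$, whereas $\gamma^*=\lim\gamma^{*,k_j}=\lim\vartheta_P^{k_j}\le\vartheta_P$; therefore $\gamma^*=\vartheta_P$ and the accumulation point is a minimizer of ($P$). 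Finally, $\Upsilon_{k+1}\subseteq\Upsilon_k$ makes $\{\vartheta_P^k\}$ nondecreasing and bounded above by $\vartheta_P$, so it converges, and the previous step forces the limit to equal $\vartheta_P$, which is the minimum of \eqref{CPLS}; thus $\{\gamma^{*,k}\}$ converges to that minimum.

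\emph{The main obstacle.} The heart of the argument is the step $x^*\in\mathcal{R}$, since membership in $\mathcal{R}$ is only approached from outside through the $\Upsilon_k$ and taking limits needs the closedness of each $\Upsilon_{k_0}$. This is where compactness of $\Delta$ and the archimedean property are essential: the relation $L^{(k)}_h(\tilde x^{*,k})=0$ propagates the normalization $\tilde x^{*,k}_0=\operatorname{trace}(X^{*,k})$ to every entry (so that, with $M_k(\tilde x^{*,k})\succeq0$, all entries of $\tilde x^{*,k}$ are bounded in terms of $\operatorname{trace}(X^{*,k})$), whence the truncations $\tilde x^{*,k_j}|_{2k_0}$ lie in a compact subset of $\Gamma_{k_0}$ and any limit stays in $\Gamma_{k_0}$, giving $x^*\in\Upsilon_{k_0}$. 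I would either invoke the closedness of $\Upsilon_{k_0}$ from \cite{Nie1} or carry out this moment-extraction argument directly. A lesser technical point is the interior claim $s\in\operatorname{int}(\Psi_k)$ in Part (i), which must be argued rather than read off from $s\in\operatorname{int}(\mathcal{P})$ because $\Psi_k$ is a strictly smaller cone.
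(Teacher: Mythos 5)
Your proof is correct and follows the same overall architecture as the paper's (transfer the dual relative interior point to the relaxations, then sandwich $\gamma^{*,k}$ between $\vartheta_P^k$ and $\vartheta_P$ after showing accumulation points land in $\bigcap_k \Upsilon_k=\mathcal{R}$), but it differs in two sub-steps. In part (i), the paper establishes $s^0\in\operatorname{int}(\Psi_{N_0})$ by invoking the uniform degree bound of Nie--Schweighofer (\cite[Theorem 6]{Nie5}), which places the whole ball $B(s^0,\delta)-\epsilon$ in $I_{2N_0}(h)+Q_{N_0}(g)$ at once; you instead use plain Putinar on $s^0-\epsilon\sum_i x_i^2$ and add back $\epsilon\sum_i x_i^2$, which has an obvious relative interior certificate (a positive definite Gram matrix stable under perturbation in $\mathbb{R}[x]_E$), relying on the fact that a cone element plus a relative interior element is relative interior. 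Both are valid; yours avoids the quantitative Positivstellensatz at the cost of the convexity lemma. In part (ii), your boundedness argument is genuinely simpler and exploits structure the paper does not use: $0\le\gamma^{*,k}=\vartheta_P^k\le\vartheta_P<\infty$ and $\|Y^{*,k}\|_p\le\gamma^{*,k}$ immediately bound the triple, whereas the paper bounds the mass $(\tilde x^{*,k})_{\mathbf 0}$ via the dual interior point and then the whole moment vector via archimedeanness --- more than is needed for the stated claim (though that stronger bound is exactly what you end up re-deriving when you justify closedness of $\Upsilon_{k_0}$, a point the paper passes over silently and you rightly flag). The remaining steps (feasibility of the limit, $\vartheta_P=\gamma^*$, convergence of $\{\gamma^{*,k}\}$ via monotonicity of $\vartheta_P^k$) match the paper.
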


\begin{proof}

(i) Let $(\lambda^0, s^0,Z^0)\in \mathcal{F}($D$)$ with $\lambda^0_i> 0(i=m_e+1,\ldots,m)$,
  and $(s^0,(Z^0,1)) \in \operatorname{int}(\mathcal{P} \times \mathcal{K}_p^*)$.
Then, $s^0|_\Delta>0$ (cf. \cite[Lemma 3.1]{Nie1}).
Note that since $\Delta$ is compact,
 there exist $ \epsilon > 0$ and $ \delta > 0$ such that
$$
s|_\Delta - \epsilon > \epsilon, \quad \forall s \in B(s^0, \delta).
$$
By \cite[Theorem 6]{Nie5}, there exists $N_0 > 0$ such that
$$
s - \epsilon \in I_{2N_0} (h) + Q_{N_0}(g), \quad \forall s \in B(s^0, \delta).
$$
So ($D^k$) has a relative interior point for all $k \geq N_0$,  thus the strong duality holds for
($P^k$) and ($D^k$).
As ($P$) is feasible, the relaxation problem ($P^k$) is also feasible.
So, ($P^k$) has a minimizer $(x^{*,k}, Y^{*,k},\gamma^{*,k},\tilde x^{*,k})$ (cf. \cite[Theorem 2.4.I]{Ben}).

(ii) We first show $\{(x^{*,k}, Y^{*,k},\gamma^{*,k})\}$ is bounded.
Let $(\lambda^0, s^0,Z^0)$ and $\epsilon$ be as
in the proof of (i). The set $I_{2N_0}(h) + Q_{N_0}(g)$ is dual to $\Gamma_{N_0}$. For all $k \geq N_0$, we have $\tilde x^{*,k} \in \Gamma_{N_0}$ and
$$
\begin{array}{c}
  0 \leq \langle s^0 - \epsilon,\tilde x^{*,k}\rangle = \langle s^0,\tilde x^{*,k}\rangle - \epsilon \langle 1,\tilde x^{*,k} \rangle, \\
    \langle (s^0,Z^0,1),(x^{*,k}, Y^{*,k},\gamma^{*,k})\rangle  =  \gamma^{*,k}-b^T\lambda^0 -  C\bullet Z^0.
\end{array}
$$
Since $\gamma^{*,k}\leq \vartheta_P$ and
$\langle s^0,\tilde x^{*,k}\rangle=\langle s^0,x^{*,k}\rangle \leq \langle (s^0,Z^0,1),(x^{*,k}, Y^{*,k},\gamma^{*,k})\rangle $,
 it holds that
$$
\langle s^0,\tilde x^{*,k}\rangle \leq T_0:= \vartheta_P-b^T\lambda^0 -  C\bullet Z^0.
$$
We get
$$
\begin{array}{c}
  0 \leq \langle s^0 - \epsilon,\tilde x^{*,k}\rangle \leq T_0 -\epsilon (\tilde x^{*,k})_{\mathbf{0}}, \\
  (\tilde x^{*,k})_{\mathbf{0}} \leq T_1:= T_0/\epsilon.
\end{array}
$$
Note that $I(h) + Q(g)$ is archimedean, following the line of proof given in \cite[Theorem 4.3 (ii)]{Nie1}, we can obtain that the sequence $\{x^{*,k}\}$ is bounded.
Due to the relationships between the definitions of $x, Y$ and $\gamma$,
we know $\{(x^{*,k}, Y^{*,k},\gamma^{*,k})\}$ is bounded.

Suppose $(x^{*}, Y^{*},\gamma^{*})$ is an accumulation point of $\{(x^{*,k}, Y^{*,k},\gamma^{*,k})\}$.
Without loss of generality, we assume
$$
(x^{*,k}, Y^{*,k},\gamma^{*,k})\rightarrow (x^{*}, Y^{*},\gamma^{*}), \quad k \rightarrow \infty.
$$
Since $x^{*,k}\in \Upsilon_{k} $,
by \eqref{h13} and \eqref{h14}, we have $x^*\in \bigcap_{k=1}^{\infty} \Upsilon_k= \mathcal{R}$.
Note that $(x^{*,k}, Y^{*,k},\gamma^{*,k})\in \mathcal{F}(P^k)$,
we further  obtain $(x^{*}, Y^{*},\gamma^{*})\in \mathcal{F}(P)$.
Hence,
\begin{equation}\label{4.8}
\vartheta_P\leq \gamma^*.
\end{equation}
Since ($P^k$) is a relaxation problem of ($P$) and $(x^{*,k}, Y^{*,k},\gamma^{*,k})$ is a minimizer of ($P^k$),
 we have
\[
\vartheta_P\geq \gamma^{*,k},\ k=1,2,\ldots
\]
Taking $k\to \infty$, we get
\begin{equation}\label{4.9}
\vartheta_P\geq \lim_{k\rightarrow \infty} \gamma^{*,k}=\gamma^*,
\end{equation}
which together with \eqref{4.8} implies that
$$
\vartheta_P=\gamma^{*}.
$$
So, $(x^{*}, Y^{*},\gamma^{*})$ is a minimizer of ($P$),  and the sequence $\{\gamma^{*,k}\}$ converges to the minimum of ($P$).
\end{proof}

\begin{remark} \label{finiteconvergence}

If \eqref{CPLS} is feasible, then, under some general conditions \cite{Nie3,NieR},
which is almost necessary and sufficient, we can get a flat extension $\tilde x^{*,k}$ by solving
the hierarchy of ($P^k$), within finitely many steps \cite[Section 4]{Nie1}.

\end{remark}

\subsection{Subproblem solving}
   We discuss how to solve the subproblem ($P^k$) in Algorithm \ref{Algorithm} for different $p$-norm cone $\mathcal{K}_p$ ($p=1,2,\infty, F$).

\smallskip
\textbf{1. $1$-norm or $\infty$-norm cone.}
The $1$-norm and $\infty$-norm are the same for symmetric matrices.
Let $Y=Y^+ - Y^-$, where $Y^+, Y^- \geq 0$ and $Y^+, Y^-\in \mathcal{S}_n$.
Then ($P^k$) can be transformed to the following problem:
\begin{align}\label{pkl1}
\begin{array}{cl}
 \min\limits_{\mathbf{x},Y^+,Y^-, \gamma, \tilde x}  & \gamma\\
  \mbox{s.t.}  &  a_i^T x   = b_i, \ i=1,\ldots,m_e, \\
 &   a_i^T x  \geq b_i, \ i=m_e+1,\ldots,m, \\
 & x-\operatorname{vech}(Y^+ - Y^-)=\operatorname{vech}(C),\\
 & \tilde E_{j}\bullet (Y^+ + Y^-) \leq \gamma, \ j=1,\ldots, n, \\
 & Y^+, Y^- \geq 0, Y^+, Y^-\in \mathcal{S}_n,\\
 & x=\tilde x|_{ E }, \tilde x \in   \Gamma_k,
 \end{array}
\end{align}
where $\tilde E_{j}$ is the matrix whose $j$-th column is of all ones and other entries are zeros.

\eqref{pkl1}  is a linear optimization problem with linear matrix inequalities.
It can be solved by the softwares GloptiPoly 3 \cite{HenrionJJ} and SeDuMi \cite{Sturm}.

\smallskip
\textbf{2. $2$-norm cone.}
Note that $(Y,\gamma)\in \mathcal{K}_2$ if and only if
$\left(
                   \begin{array}{cc}
                     \gamma I_n & Y \\
                     Y^T & \gamma I_n \\
                   \end{array}
                 \right)\succeq 0.$
Since $Y=X-C$, we can transform ($P^k$) to the following problem:
\begin{align}\label{pkl2}
\begin{array}{cl}
 \displaystyle  \min\limits_{x, \gamma, \tilde x}  & \gamma\\
  \mbox{s.t.} &   a_i^T x   = b_i, \ i=1,\ldots,m_e, \\
 &   a_i^T x  \geq b_i, \ i=m_e+1,\ldots,m, \\
 & \left(
                   \begin{array}{cc}
                     \gamma I_n & \operatorname{vech}^{-1}(x)-C \\
                     (\operatorname{vech}^{-1}(x)-C)^T & \gamma I_n \\
                   \end{array}
                 \right)\succeq 0, \\
  & x=\tilde x|_{ E },  \tilde x \in   \Gamma_k,
 \end{array}
\end{align}
where $\operatorname{vech}^{-1}(\cdot)$ denotes the inverse of the linear operator $\operatorname{vech}(\cdot)$.

\eqref{pkl2} can also be solved by the softwares GloptiPoly 3 \cite{HenrionJJ} and SeDuMi \cite{Sturm}.

\smallskip
\textbf{3. $F$-norm cone.}
Let
\begin{align*}
y= \operatorname{vech}(\sqrt{2}{E}_n+(1-\sqrt{2})I_n) \circ \operatorname{vech}(Y).
\end{align*}
Then $(Y,\gamma)\in \mathcal{K}_F$ if and only if $(y,\gamma)\in \mathcal{L}_{\bar n+1},$
where
$$
\mathcal{L}_{\bar n+1}= \{(y,\gamma)\in \mathbb{R}^{\bar n+1} :  \|y\|_2 \leq\gamma \}
$$
is the second-order cone (or Lorentz cone).
Since $Y=X-C$, ($P^k$) can be transformed to the following problem:
\begin{align}\label{pkF}
\begin{array}{rl}
 \displaystyle  \min\limits_{x, y, \gamma, \tilde x}  & \gamma\\
  \mbox{s.t.} &   a_i^T x   = b_i, \ i=1,\ldots,m_e, \\
 &   a_i^T x  \geq b_i, \ i=m_e+1,\ldots,m, \\
 & y= \operatorname{vech}(\sqrt{2}{E}_n+(1-\sqrt{2})I_n) \circ (x-\operatorname{vech}(C)), \\
  & x=\tilde x|_{ E }, \\
  & (\tilde x, (y,\gamma)) \in   \Gamma_k \times \mathcal{L}_{\bar n+1}.
 \end{array}
\end{align}

\eqref{pkF} is a linear optimization problem with the second-order cone and linear matrix inequalities.
It can be solved by the softwares GloptiPoly 3 \cite{HenrionJJ} and SeDuMi \cite{Sturm}.

\section{Numerical experiments}
In this section, we present some numerical experiments for computing the projection of a matrix
onto the intersection of a set of linear constraints and the CP cone  by using Algorithm \ref{Algorithm}.
A CP-decomposition of the projection matrix is also given if the problem is feasible.
The experiments are implemented on a laptop  with an Intel Core i5-2520M CPU
and 4GB of RAM, using Matlab R2012b. We only display 4 digits for each number.

\subsection{CP-approximation in $1$-norm or $\infty$-norm}

\bex\label{ex2l1} \upshape
Consider the symmetric matrix $C$ given as:
\begin{equation}\label{Exa2l1}
C=\left(\begin{array}{cccc}
     2   &  1   &  1   &  1\\
     1    & 2   &  2   &  1\\
     1   &  2&     6  &   5\\
     1   &  1 &    5  &   6\\
\end{array}\right).
\end{equation}
It can be checked that $C$ is double nonnegative.
Since a symmetric double nonnegative matrix with the order less than or equal to 4 is CP (cf. \cite{BermanN}), we have $C\in \mathcal{CP}_4$.

\smallskip
\textbf{Case 1.} Consider \eqref{CPLS} without linear constraints,
i.e., we compute the projection of $C$ onto $\mathcal{CP}_4$ in $1$-norm.

Algorithm \ref{Algorithm} terminates at $k=3$, with
$\gamma^{*,k}=0.0000$ and $x^{*,k}\in \mathcal{R}$.
So, $X^*=C$. This verifies that $C$ is CP.
The CP-decomposition of $C$ is $C =\sum_{i=1}^{5} \rho_i u_i u_i^T$,
where the points and their weights are:
\begin{eqnarray*}\label{Exa2del1}
&\rho_1=3.0297,  & u_1=(0.0000,
    0.6287,
    0.6491,
    0.4284)^T, \nonumber \\
&\rho_2=7.6746,  & u_2=(0.0000,
    0.0000,
    0.6347,
    0.7728)^T, \nonumber \\
&\rho_3=2.6969,  & u_3=(0.4767,
    0.3641,
    0.7779,
    0.1875)^T, \\
&\rho_4=1.0808,  & u_4=(0.7669,
    0.6418,
    0.0000,
    0.0000)^T, \nonumber\\
&\rho_5=1.5179,  & u_5=(0.7036,
    0.0000,
    0.0000,
    0.7106)^T.\nonumber
\end{eqnarray*}

\smallskip
\textbf{Case 2.} Consider \eqref{CPLS} with the CP cone and the linear constraints $A_i \bullet X= b_i (i=1,2)$,
where
\begin{align*}
& A_1=I_4,
    \quad A_2=\left(
                  \begin{array}{cccc}
                    0 & 1 & 0 & 1 \\
                    1 & 0 & 1 & 0 \\
                    0 & 1 & 0 & 1 \\
                    1 & 0 & 1 & 0 \\
                  \end{array}
                \right),
\\
& b_1=10, \quad b_2=12.
\end{align*}

Algorithm \ref{Algorithm} terminates at $k=3$  with $\gamma^{*,k}=3.0209$ and $x^{*,k}\in \mathcal{R}$.
The optimal solution is
\begin{equation*}\label{Exa1l1case2}
X^*=\left(\begin{array}{cccc}
    0.2709  &  0.1572  &  0.9582 &   0.5928\\
    0.1572  &  0.6302  &  1.1918 &   1.0000\\
    0.9582  &  1.1918  &  4.7709  &  4.0582\\
    0.5928  &  1.0000   & 4.0582  &  4.3280
\end{array}\right).
\end{equation*}
The CP-decomposition of $X^*$ is $X^* =\sum_{i=1}^{3} \rho_i u_i u_i^T$,
where the points and their weights are:
\begin{eqnarray*}\label{Exa1l1case2de}
&\rho_1=0.8735,  & u_1=(0.2014,
    0.6294,
    0.7506,
    0.0000)^T, \nonumber \\
&\rho_2=3.1791,  & u_2=(0.2677,
    0.0000,
    0.8209,
    0.5044)^T, \nonumber \\
&\rho_3=5.9473,  & u_3=(0.0357,
    0.2186,
    0.5993,
    0.7693)^T. \nonumber
\end{eqnarray*}

\smallskip
\textbf{Case 3.}  Consider \eqref{CPLS} with the CP cone and the linear constraints $A_i \bullet X= b_i (i=1,2)$, where
\begin{align*}
&  A_1=\left(
      \begin{array}{cccc}
        1 & -1 & 1 & -1  \\
        -1 & 2 & -2 & 2 \\
        1 & -2 & 3 &-3\\
        -1 & 2 & -3 & 4
      \end{array}
    \right), \quad A_2=-I_4
    \\
 & b_1=5, \quad b_2=-19.
\end{align*}

Algorithm \ref{Algorithm} terminates at $k=2$ as ($P^k$) is infeasible.
So, \eqref{CPLS} is infeasible.

\smallskip
\textbf{Case 4.} Consider \eqref{CPLS} with the CP cone and the linear constraints
 $A_1 \bullet X= b_1$ and $A_2 \bullet X\geq b_2$,
 where $A_i, b_i (i=1,2)$ are the same as in Case 3.

 Algorithm \ref{Algorithm} terminates at $k=3$ with $\gamma^{*,k}=1.6916$.
 The optimal solution is
\begin{equation*}\label{Exa1l1case2}
X^*=\left(\begin{array}{cccc}
    0.8232  &  0.8946  &  1.4094  &  1.0000\\
    0.8946   & 0.9745   & 1.4394  &  1.0000\\
    1.4094  &  1.4394  &  5.9043 &   4.9991\\
    1.0000  &  1.0000  &  4.9991  &  4.3094\\
\end{array}\right).
\end{equation*}
The CP-decomposition of $X^*$ is $X^* =\sum_{i=1}^{2} \rho_i u_i u_i^T$,
where the points and their weights are:
\begin{eqnarray*}\label{Exa2l1c4}
&\rho_1=2.0184,  & u_1=(0.5817,
    0.6464,
    0.4532,
    0.1957)^T, \nonumber \\
&\rho_2=9.9929,  & u_2=(0.1184,
    0.1145,
    0.7412,
    0.6508)^T. \nonumber
\end{eqnarray*}
 \eex

\subsection{CP-approximation in 2-norm}

\bex\label{SponselExample22norm} \upshape
 Consider the symmetric matrix $C$ given as (cf. \cite{SponselD}):
\begin{equation}\label{SponselExa}
C=\left(\begin{array}{ccccc}
  2&1&1&1&2\\
  1&2&2&1&1\\
  1&2&6&5&1\\
  1&1&5&6&2\\
  2&1&1&2&3
\end{array}\right).
\end{equation}
It is shown in \cite{SponselD} that $C\in \mathcal{CP}_5$ and the CP-rank of $C$ is 5.

\smallskip
\textbf{Case 1.}  Consider \eqref{CPLS} with the CP cone and the linear constraints $A_i \bullet X= b_i (i=1,2, 3)$, where
\begin{align*}
&A_1=I_5, \quad A_2=\left(
      \begin{array}{ccccc}
        1 & -1 & 1 & -1 & 1 \\
        -1 & 2 & -2 & 2 & -2 \\
        1 & -2 & 3 & -3 & 3 \\
        -1 & 2 & -3 & 4 & -4 \\
        1 & -2 & 3 & -4 & 5
      \end{array}
    \right),
    \quad A_3=\left(
                  \begin{array}{ccccc}
                    0 & 1 & 0 & 1 & 0 \\
                    1 & 0 & 1 & 0 & 1 \\
                    0 & 1 & 0 & 1 & 0 \\
                    1 & 0 & 1 & 0 & 1 \\
                    0 & 1 & 0 & 1 & 0
                  \end{array}
                \right),
\\
& b_1=19, \quad b_2=17, \quad b_3=24.
\end{align*}

Algorithm \ref{Algorithm} terminates at $k=3$, with
$\gamma^{*,k}=0.0000$ and $x^{*,k}\in \mathcal{R}$.
So, $X^*=C$. This implies that $C$ is not only CP but also satisfies the linear constraints.
The CP-decomposition of $C$ is $C =\sum_{i=1}^{5} \rho_i u_i u_i^T$,
where the points and their weights are:
\begin{eqnarray*}\label{Exa4deF}
&\rho_1=5.5421,  & u_1=(0.0862, 0.0000, 0.3963, 0.7926, 0.4553)^T, \nonumber \\
&\rho_2=3.8751,  & u_2=(0.6826, 0.2828, 0.0000, 0.0000, 0.6738)^T, \nonumber \\
&\rho_3=7.2866,  & u_3=(0.1450, 0.2334, 0.7608, 0.5879, 0.0000)^T, \\
&\rho_4=0.8380,  & u_4=(0.0000, 0.9438, 0.0000, 0.0000, 0.3306)^T, \nonumber\\
&\rho_5=1.4582,  & u_5=(0.0058, 0.6122, 0.7907, 0.0000, 0.0000)^T.\nonumber
\end{eqnarray*}

We obtained a minimal CP-decomposition of $C$.
It is different from the minimal CP-decomposition given in \cite{SponselD}.

\smallskip
\textbf{Case 2.} Consider \eqref{CPLS} with the CP cone and the linear constraints $A_i \bullet X= b_i (i=1,2, 3)$, where  $A_1, A_2, A_3, b_1, b_3$ are the same as in Case 1, and
$$
b_2=50.
$$

Algorithm \ref{Algorithm} terminates at $k=3$  with $\gamma^{*,k}=2.8436$.
The optimal solution is
\begin{equation*}\label{Exa4proFlinear2norm}
X^*=\left(\begin{array}{ccccc}
    1.6135 &   1.3913 &   2.3928  &  1.6291  &  2.3642\\
    1.3913  &  2.4173 &   2.1301  &  3.1909  &  1.6217\\
    2.3928  &  2.1301 &   5.0301  &  4.1451  &  2.8557\\
    1.6291  &  3.1909 &   4.1451  &  6.0656  &  1.0827\\
    2.3642  &  1.6217 &   2.8557  &  1.0827  &  3.8735\\
\end{array}\right).
\end{equation*}
The CP-decomposition of $X^*$ is $X^* =\sum_{i=1}^{3} \rho_i u_i u_i^T$,
where the points and their weights are:
\begin{eqnarray*}\label{Exa4deFlinear2norm}
&\rho_1=6.4943,  & u_1=(0.4251,
    0.1801,
    0.4791,
    0.0000,
    0.7465)^T, \nonumber \\
&\rho_2=3.9517,  & u_2=(0.1403,
    0.0000,
    0.7829,
    0.6061,
    0.0000)^T, \nonumber \\
&\rho_3=8.5539,  & u_3=(0.2058,
    0.5079,
    0.3613,
    0.7344,
    0.1723)^T. \nonumber
\end{eqnarray*}

\smallskip
 \textbf{Case 3.} Consider \eqref{CPLS} with the CP cone and the linear constraints $A_i \bullet X= b_i (i=1,2, 3)$, where $A_1, A_2, A_3, b_1, b_3$ are  the same as in Case 1, but
 $$
 b_2=-50.
 $$

Algorithm \ref{Algorithm} terminates at $k=2$ as ($P^k$) is infeasible.
So, \eqref{CPLS} is infeasible.

\smallskip
\textbf{Case 4.} Consider \eqref{CPLS} with the CP cone and the linear constraints $A_i \bullet X= b_i (i=1,2)$ and $A_3 \bullet X \geq b_3$,
 where $A_i (i=1,2,3)$ are the same as in Case 1, and
 $$
 b_1=10,\quad b_2=12,\quad b_3=-2.
 $$

  Algorithm \ref{Algorithm} terminates at $k=3$, with
$\gamma^{*,k}=3.3763$ and $x^{*,k}\in \mathcal{R}$.
The optimal solution is:
\begin{equation}\label{Exa42normcase4pro}
X^*=\left(\begin{array}{ccccc}
    0.4943  &  0.3541  &  1.2119 &   0.9809  &  0.7703\\
    0.3541  &  0.4565 &   1.3018 &   1.3352  &  0.4720\\
    1.2119  &  1.3018 &   3.8986  &  3.7575  &  1.7180\\
    0.9809  &  1.3352  &  3.7575  &  3.9192  &  1.2798\\
    0.7703  &  0.4720 &   1.7180  &  1.2798  &  1.2316\\
\end{array}\right).
\end{equation}
The CP-decomposition of $X^*$ is $X^* =\sum_{i=1}^{2} \rho_i u_i u_i^T$,
where the points and their weights are:
\begin{eqnarray*}\label{Ex2normcase4F}
&\rho_1=1.3602,  & u_1=(0.4277,
    0.0342,
    0.4667,
    0.0000,
    0.7734)^T, \nonumber \\
&\rho_2=8.6396,  & u_2=(0.1686,
    0.2295,
    0.6457,
    0.6735,
    0.2199)^T. \nonumber
\end{eqnarray*}
 \eex

\subsection{CP-approximation in $F$-norm}

\bex\label{SponselExample} \upshape
Consider the symmetric matrix $C$ given in Example \ref{SponselExample22norm}.

\smallskip
 \textbf{Case 1.}   Consider \eqref{CPLS} with the CP cone and the linear constraints $A_i \bullet X= b_i (i=1,2, 3)$,
 where $A_i, b_i (i=1,2,3)$ are  the same as in Case 1 of Example \ref{SponselExample22norm}.

 Algorithm \ref{Algorithm} terminates at $k=3$, with
$\gamma^{*,k}=0.0000$ and $x^{*,k}\in \mathcal{R}$. So,  $C\in \mathcal{CP}_5$.
We get the same CP-decomposition of $C$ as that in Case 1 of Example \ref{SponselExample22norm}.


\smallskip
 \textbf{Case 2.} Consider \eqref{CPLS} with the CP cone and the linear constraints $A_i \bullet X= b_i (i=1,2, 3)$, where $A_i, b_i (i=1,2,3)$ are  the same as in Case  2 of Example \ref{SponselExample22norm}.

Algorithm \ref{Algorithm} terminates at $k=3$  with $\gamma^{*,k}=4.7642$. The optimal solution is
\begin{equation*}\label{Exa4proFlinear}
X^*=\left(\begin{array}{ccccc}
  1.4456&1.2748&2.0122&1.7636&2.1333\\
  1.2748&1.3345&2.1511&2.4889&1.4697\\
  2.0122&2.1511&5.5864&4.4398&3.0769\\
  1.7636&2.4889&4.4398&6.3423&0.9011\\
  2.1333&1.4697&3.0769&0.9011&4.2912
\end{array}\right).
\end{equation*}
The CP-decomposition of $X^*$  is $X^* =\sum_{i=1}^{4} \rho_i u_i u_i^T$,
where the points and their weights are:
\begin{eqnarray*}\label{Exa4deFlinear}
&\rho_1=2.5727,  & u_1=(0.5085, 0.3250, 0.0000, 0.0000, 0.7973)^T, \nonumber \\
&\rho_2=7.8537,  & u_2=(0.1698, 0.2829, 0.4990, 0.8013, 0.0000)^T, \nonumber \\
&\rho_3=4.5924,  & u_3=(0.1992, 0.1020, 0.7125, 0.0000, 0.6650)^T, \\
&\rho_4=3.9812,  & u_4=(0.3055, 0.3115, 0.5712, 0.5713, 0.3962)^T. \nonumber
\end{eqnarray*}

\smallskip
 \textbf{Case 3.} Consider \eqref{CPLS} with the CP cone and the linear constraints $A_i \bullet X= b_i (i=1,2, 3)$, where $A_i, b_i (i=1,2,3)$ are  the same as in Case 3 of Example \ref{SponselExample22norm}.

Algorithm \ref{Algorithm} terminates at $k=2$ as ($P^k$) is infeasible.
So, \eqref{CPLS} is infeasible.

\smallskip
 \textbf{Case 4.}   Consider \eqref{CPLS} with the CP cone and the linear constraints $A_i \bullet X= b_i (i=1,2)$ and $A_3 \bullet X \geq b_3$,
 where $A_i (i=1,2,3)$ are the same as in Case 4 of Example \ref{SponselExample22norm}.

  Algorithm \ref{Algorithm} terminates at $k=3$, with
$\gamma^{*,k}=5.1904$ and $x^{*,k}\in \mathcal{R}$.
The optimal solution is:
\begin{equation}\label{Exa4case4pro}
X^*=\left(\begin{array}{ccccc}
    0.6441  &  0.3961  &  1.1295   & 0.8842  &  0.9479\\
    0.3961  &  0.4305 &   1.2678  &  1.2371  &  0.5508\\
    1.1295  &  1.2678  &  3.7388  &  3.6771  &  1.5635\\
    0.8842  &  1.2371  &  3.6771  &  3.7860  &  1.1818\\
    0.9479  &  0.5508  &  1.5635  &  1.1818   & 1.4007
\end{array}\right).
\end{equation}
The CP-decomposition of $X^*$  is $X^* =\sum_{i=1}^{2} \rho_i u_i u_i^T$,
where the points and their weights are:
\begin{eqnarray*}\label{Ex4case4F}
&\rho_1=1.6630,  & u_1=(0.5129,
    0.1257,
    0.3173,
    0.0000,
    0.7877)^T, \nonumber \\
&\rho_2=8.3370,  & u_2=(0.1574,
    0.2202,
    0.6545,
    0.6739,
    0.2104)^T. \nonumber
\end{eqnarray*}
 \eex

\bex\label{randomExample} \upshape
  Consider the symmetric matrix
\begin{equation*}\label{randex}
C=\left(\begin{array}{cccccc}
     4   &  5  &   4  &   6    & 4  &   2\\
     5  &   1   &  4  &   7    & 4  &   6\\
     4   &  4  &   4  &   2  &   5  &   4\\
     6  &   7  &   2  &   0   &  3   &  7\\
     4   &  4 &    5 &    3  &   1   &  6\\
     2&     6   &  4    & 7    & 6    & 4\\
\end{array}\right),
\end{equation*}
 which is generated randomly in Matlab.

\smallskip
  \textbf{Case 1.} Consider \eqref{CPLS} without linear constraints.
Algorithm \ref{Algorithm} terminates at $k=3$, with $\gamma^{*,k}=9.7852$.
So, $C$ is not CP.
The projection matrix of $C$ onto $\mathcal{CP}_5$ is
\begin{equation*}\label{rand1proF}
X^*=\left(\begin{array}{cccccc}
    5.3184  &  4.4216  &  3.6259 &   4.2906  &  3.4447 &   3.6739\\
    4.4216    &4.8771  &  3.6321 &   4.7517  &  3.9731 &   5.1854\\
    3.6259   & 3.6321 &   4.7970 &   3.0763  &  3.9514  &  3.9240\\
    4.2906  &  4.7517  &  3.0763 &   4.7343   & 3.6374   & 4.9919\\
    3.4447   & 3.9731 &   3.9514  &  3.6374   & 3.7868  &  4.5142\\
    3.6739   & 5.1854  &  3.9240  &  4.9919  &  4.5142 &   6.3710\\
\end{array}\right).
\end{equation*}
The CP-decomposition of $X^*$ is $X^* =\sum_{i=1}^{4} \rho_i u_i u_i^T$,
where the points and their weights are:
\begin{eqnarray*}\label{Exa4deF}
&\rho_1=7.0443,  & u_1=(0.3599,
    0.4862,
    0.0000,
    0.5526,
    0.2300,
    0.5252)^T, \nonumber \\
&\rho_2=4.6526,  & u_2=(0.0000,
    0.2673,
    0.6042,
    0.1571,
    0.4868,
    0.5494)^T, \nonumber \\
&\rho_3=13.2067,  & u_3=(0.3707,
    0.4278,
    0.3766,
    0.4028,
    0.3828,
    0.4785)^T, \nonumber \\
&\rho_4=4.9810,  & u_4=(0.7213,
    0.3047,
    0.4961,
    0.2554,
    0.2749,
    0.0000)^T. \nonumber
\end{eqnarray*}

\smallskip
 \textbf{Case 2.}  Consider \eqref{CPLS} with the CP cone and the linear constraints $A_i \bullet X= b_i (i=1,2)$, where $A_i, b_i (i=1,2)$ are generated randomly:
\begin{align*}
& A_1=\left(
      \begin{array}{cccccc}
   -12  &   0  &   7 &   -5   &  4  &  -2\\
     0  &   3  &   1 &   -2  &  -6  & -13\\
     7   &  1  &   4  &   1   & -9   &  6\\
    -5  &  -2   &  1  &   7  &  -9   & 10\\
     4   & -6  &  -9  &  -9   &-19  &   1\\
    -2  & -13   &  6 &   10   &  1   & 13
      \end{array}
    \right),
\\
& A_2=\left(
                  \begin{array}{cccccc}
    -4 &    3  &  11   & 11   &  2  &  -5\\
     3    & 6   &  3   & -3  &   5  &  -9\\
    11   &  3   &  5   &  0  &  -3  &  -9\\
    11  &  -3   &  0  &  14  &  -4  & -16\\
     2  &   5  &  -3   & -4  &   7  & -14\\
    -5   & -9  &  -9  & -16  & -14 &    3\\
                  \end{array}
                \right),
\\ &
b_1=-17,\quad b_2=6.
\end{align*}

Algorithm \ref{Algorithm} terminates at $k=3$, with $y_1^{*,k}=11.4970$.
The optimal solution is
\begin{equation*}\label{rand1proFlinear}
X^*=\left(\begin{array}{cccccc}
    5.5277 &   4.9260  &  5.0372   & 4.5381  &  3.1904  &  2.7187\\
    4.9260 &   4.7114  &  4.2671 &   4.8028  &  3.0364  &  3.5882\\
    5.0372   & 4.2671  &  6.0026 &   3.3516  &  3.4975  &  3.0055\\
    4.5381  &  4.8028  &  3.3516 &   5.5691  &  2.9252  &  4.7051\\
    3.1904   & 3.0364  &  3.4975 &   2.9252  &  2.3729  &  3.0340\\
    2.7187   & 3.5882  &  3.0055 &   4.7051 &   3.0340  &  6.9676\\
\end{array}\right).
\end{equation*}
The CP-decomposition of $X^*$ is $X^* =\sum_{i=1}^{3} \rho_i u_i u_i^T$,
where the points and their weights are:
\begin{eqnarray*}\label{Exa4deFlinear}
&\rho_1=5.0609,  & u_1=(0.5069,
    0.5243,
    0.0000,
    0.6726,
    0.0996,
    0.0766)^T, \nonumber \\
&\rho_2=7.4131,  & u_2=(0.0000,
    0.1967,
    0.0000,
    0.4359,
    0.1960,
    0.8561)^T, \nonumber \\
&\rho_3=18.6772,  & u_3=(0.4757,
    0.4030,
    0.5669,
    0.3165,
    0.3303,
    0.2839)^T. \nonumber
\end{eqnarray*}

\smallskip
   \textbf{Case 3.}  Consider \eqref{CPLS} with the CP cone and the linear constraints $A_i \bullet X= b_i (i=1,2)$, where $A_i, b_i (i=1,2)$ are generated randomly:
\begin{align*}
& A_1=\left(
      \begin{array}{cccccc}
     8  &   -2  &  5&    6   &  5   &  -4\\
    -2 &   10   &  8 &   12  &  17  &   4\\
     5  &   8 &    7   &  6  &  -2  &   -3\\
     6  &  12  &   6   &  4  &  12  &   7\\
     5  &  17  &  -2   & 12   & 10  &  -8\\
     -4   &  4&     -3  &   7 &   -8  &   9
      \end{array}
    \right),
 \\
 & A_2=\left(
                  \begin{array}{cccccc}
    -2  & -16  & -12 &    4  &   1   & -5\\
   -16  &   3 &    8  &  -3 &  -10   &  0\\
   -12  &   8  & -13  &  -1 &   11  &   3\\
     4   & -3  &  -1  &  -3 &    5   &  9\\
     1  & -10  &  11  &   5  &  10   &  3\\
    -5   &  0  &   3  &   9 &    3  & -15
                  \end{array}
                \right),
    \\ & b_1=-6,\quad b_2=4.
\end{align*}
Algorithm \ref{Algorithm} terminates at $k=2$ as ($P^k$) is infeasible.
So, \eqref{CPLS} is infeasible.

\smallskip
  \textbf{Case 4.}  Consider \eqref{CPLS} with the CP cone and the linear constraints $A_1 \bullet X= b_1, A_2 \bullet X\geq b_2$, where $A_i, b_i (i=1,2)$ are generated randomly:
\begin{align*}
& A_1=\left(
      \begin{array}{cccccc}
     5  &   7  &  -4  &  -9 &    4  &   9\\
     7   & -2  &   6 &   -4    & 7  &  -6\\
    -4  &   6  & -17 &   -9 &   -1  &   6\\
    -9  &  -4  &  -9 &    5 &  -13  &   6\\
     4  &   7  &  -1 &  -13&    -3 &    1\\
     9   & -6  &   6   &  6 &    1  &  -6
      \end{array}
    \right),
 \\
 & A_2=\left(
                  \begin{array}{cccccc}
     2  &  -4  &   6   &  4   &  7 &    1\\
    -4   & -2  &  11   &  2  &   6  &   7\\
     6  &  11  &  12  &  -9  & -2    & 7\\
     4  &   2   & -9  &  -3  &   0  &  10\\
     7   &  6   & -2   &  0  &   4  & -11\\
     1 &    7     &7    &10  & -11  &  11
                  \end{array}
                \right),
    \\ & b_1=7,\quad b_2=-10.
\end{align*}
Algorithm \ref{Algorithm} terminates at $k=3$, with $\gamma^{*,k}=10.4410$.
The optimal solution is
\begin{equation*}\label{rand1proFlinear}
X^*=\left(\begin{array}{cccccc}
    5.5853   & 4.9676  &  3.4690  &  3.9792 &   3.9314 &   4.6082\\
    4.9676 &   5.0242  &  3.6848   & 4.2784 &   4.1044  &  5.3069\\
    3.4690  &  3.6848  &  3.6883  &  2.3311  &  3.7530  &  3.9202\\
    3.9792  &  4.2784  &  2.3311   & 4.5442 &   2.8846   & 4.8841\\
    3.9314  &  4.1044  &  3.7530   & 2.8846 &   3.9128  &  4.3521\\
    4.6082  &  5.3069  &  3.9202  &  4.8841 &   4.3521  &  6.2313\\
\end{array}\right).
\end{equation*}
The CP-decomposition of $X^*$ is $X^* =\sum_{i=1}^{3} \rho_i u_i u_i^T$,
where the points and their weights are:
\begin{eqnarray*}\label{Exa4deFlinear}
&\rho_1=3.4136,  & u_1=(0.7633,
    0.3769,
    0.3647,
    0.0000,
    0.3772,
    0.0000)^T, \nonumber \\
&\rho_2=7.4149,  & u_2=(0.4695,
    0.4255,
    0.0000,
    0.6215,
    0.1124,
    0.4467)^T, \nonumber \\
&\rho_3=18.1576,  & u_3=(0.3287,
    0.4196,
    0.4221,
    0.3042,
    0.4285,
    0.5115)^T. \nonumber
\end{eqnarray*}
\eex

\bex\label{randomExample} \upshape
Consider the computing time of projecting a random symmetric matrix onto the CP cone.
For each $n=2,3,\ldots,10$, we generate 50 random symmetric $n\times n$ matrices.

Table 1 shows the average time (seconds) consumed by Algorithm \ref{Algorithm}
to compute the projection matrix onto the CP cone.

\begin{table}[htbp]\label{table}
\begin{center}
\begin{tabular}{cccccccccc} \hline
n & 2 & 3 & 4 & 5 & 6 & 7 & 8 & 9 & 10 \\ \hline
Time & 0.36 & 0.53 & 0.80 & 1.56 & 4.76 & 21.37 & 101.29 & 428.21 & 1732.21\\ \hline
\end{tabular}
\end{center}
\caption{The average time for computing the CP projection matrix.}
\end{table}
\eex

\section{Conclusions}
We study the CP-matrix approximation problem of projecting a symmetric matrix
onto the intersection of a set of linear constraints and the CP cone.
It includes the feasibility problem and the CP projection problem as special cases.
We formulate the problem as the linear optimization with the cone of moments and the $p$-norm cone ($p=1,2,\infty$, or $F$).
A semidefinite algorithm (i.e., Algorithm \ref{Algorithm}) is presented for it.
Its convergence is also studied.
If the problem is infeasible, we can get a certificate for it.
If the problem is feasible, we can get a projection matrix;
moreover, a CP-decomposition of the projection matrix can also be obtained.
Numerical results show that Algorithm \ref{Algorithm} is efficient in solving the CP-matrix approximation problem.

\end{document}